\newtheorem{prob}{Problem}
\newtheorem{lem}{Lemma}[section]
\newtheorem{thm}{Theorem}[section]
\newtheorem{conj}{Conjecture}
\newtheorem{defi}{Definition}[section]
\theoremstyle{definition}
\begin{document}
\title{A spectral extremal problem on graphs with given size and matching number
\footnote{Supported by National Natural Science Foundation of China (No. 11971445) and the research project of Chuzhou University (No. 2020qd18).}}

\author{
{ Mingqing Zhai$^{a}$}
, { Jie Xue$^{b}$}\thanks{Corresponding author. E-mail addresses: mqzhai@chzu.edu.cn
(M. Zhai); jie\_xue@126.com (J. Xue); rfliu@zzu.edu.cn (R. Liu).}
, { Ruifang Liu$^{b}$}
\\
{\footnotesize $^a$ School of Mathematics and Finance, Chuzhou University, Chuzhou, Anhui 239012, China} \\
{\footnotesize $^b$ School of Mathematics and Statistics, Zhengzhou University, Zhengzhou, Henan 450001, China}
}

\date{}
\maketitle {\flushleft\large\bf Abstract}

Brualdi and Hoffman (1985) proposed the problem of determining the maximal spectral radius of graphs with given size.
In this paper, we consider the Brualdi-Hoffman type problem of graphs with given matching number. The maximal $Q$-spectral radius of graphs with given size and matching number is obtained, and the corresponding extremal graphs are also determined.

\vspace{0.1cm}
\begin{flushleft}
\textbf{Keywords:} $Q$-spectral radius; Size; Matching number
\end{flushleft}
\textbf{AMS Classification:} 05C50; 05C35

\section{Introduction}\label{s-1}
Unless stated otherwise, we follow \cite{Boundy2008,Cvetkovic2010} for terminology and notations. All graphs considered here are simple and undirected. Let $G$ be a graph with vertex set $V(G)$ and edge set $E(G)$. The order of $G$ is the number of vertices in $V(G)$. The number of edges in $E(G)$ is called the size of $G$, and denoted by $m(G)$. The signless Laplacian matrix of $G$ is denoted by $Q(G)$.
The largest eigenvalue of $Q(G)$ is called the $Q$-spectral radius of $G$, and write $q(G)$.

Spectral extremal problem is a classical issue in graph spectra theory.
The core of the issue is to ask the extremal value of a spectral parameter of graphs under some constraints.
In 1985, Brualdi and Hoffman proposed the following spectral extremal problem:

\begin{prob}{(Brualdi-Hoffman Problem)}\label{B-H problem}
  For graphs of size $m$, what is the maximum of a spectral parameter?
\end{prob}

When $m$ is equal to $\binom{k}{2}$ for some integer $k$, Brualdi and Hoffman \cite{Brualdi1985} determined the maximal spectral radius of graphs of size $m$. Moreover, Brualdi and Hoffman proposed a conjecture for any $m$.

\begin{conj}{(Brualdi and Hoffman \cite{Brualdi1985})}
  If $m=\binom{k}{2}+s$, where $s<k$, the maximal spectral radius of a graph $G$ of size $m$ is attained by taking the complete graph $K_{k}$ with $k$ vertices and adding a new vertex which is joined to $s$ of the vertices of $K_{k}$.
\end{conj}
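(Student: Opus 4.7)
The plan is to reduce the problem to a highly structured class of graphs (threshold graphs) by an eigenvector-guided edge rearrangement, and then to finish with a comparison among the candidates in that class. First I would fix $m=\binom{k}{2}+s$ with $0\le s<k$, take an extremal graph $G$ of size $m$ realizing the maximum spectral radius $\rho$, and let $x$ be a Perron eigenvector of the adjacency matrix $A(G)$. Since isolated vertices do not contribute to $\rho$ and $\rho(G)$ equals the largest spectral radius among the connected components, I may replace $G$ by that component and assume $G$ is connected with $x>0$ entrywise.

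The next and central step is a Kelmans-style edge rearrangement. Suppose $u,v\in V(G)$ satisfy $x_u\ge x_v$, and suppose there is a vertex $w\ne u$ adjacent to $v$ but not to $u$. Deleting $vw$ and inserting $uw$ preserves the number of edges, and the Rayleigh quotient changes by $2x_w(x_u-x_v)\ge 0$, so $\rho$ does not decrease. Combined with the Perron eigenvalue equation and the strictness of the inequality in all but degenerate situations, extremality of $G$ forces that for every pair $u,v$ one has either $N(u)\setminus\{v\}\subseteq N(v)\setminus\{u\}$ or the reverse. Thus $G$ is a threshold graph and, in particular, is determined by a binary ``creation sequence''.

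Finally I would classify the maximum over threshold graphs of size $m$. By comparing the spectral radii of two threshold graphs whose creation sequences differ in one symbol — evaluating the Rayleigh quotient of one on the Perron eigenvector of the other — I would show that the optimal creation sequence packs edges as tightly as possible, giving $K_k$ with one additional vertex joined to exactly $s$ vertices of $K_k$, i.e.\ the conjectured graph $G^{\ast}$. The main obstacle will be the Kelmans step: the weak inequality $2x_w(x_u-x_v)\ge 0$ is immediate, but ruling out the boundary cases (equality $x_u=x_v$, the situation where the only possible $w$ equals $u$, and the possibility that $v$ is of degree too small for any suitable $w$ to exist) requires a finer use of the eigen-equation. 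By contrast, the concluding comparison among threshold graphs is an essentially combinatorial optimization over the creation sequence.
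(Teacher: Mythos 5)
You should first note that the paper itself offers no proof of this statement: it is quoted as the Brualdi--Hoffman conjecture, with the proof attributed to Rowlinson \cite{Rowlinson1988} (after partial results of Friedland \cite{Friedland1985}), so your attempt can only be judged on its own merits, not against an argument in the text.

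Your first two steps are sound and are in fact the classical reduction already present in Brualdi--Hoffman's original paper: passing to the component realizing $\rho$, and using the edge rotation $G-vw+uw$ with $x_u\ge x_v$ (together with the eigen-equation to handle the equality and degenerate cases, much as Lemma \ref{le1} does for the signless Laplacian) to conclude that an extremal graph has nested neighbourhoods, i.e.\ is a threshold (stepwise) graph. The genuine gap is your third step. Determining \emph{which} threshold graph with $m$ edges maximizes the spectral radius is not ``an essentially combinatorial optimization over the creation sequence''; it is the entire content of the theorem. Comparing two threshold graphs whose creation sequences differ in one symbol by evaluating the Rayleigh quotient of one on the Perron vector of the other only yields a one-sided bound, and the direction of that bound depends on the relative sizes of eigenvector entries that are not monotone across the two graphs in any obvious way; such local comparisons do not, by themselves, show that the maximum is attained by the ``most tightly packed'' sequence. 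This is precisely why the conjecture remained open after the stepwise reduction was known: Friedland could settle only special values of $s$, and Rowlinson's proof requires a substantially finer analysis (careful structural constraints on the extremal stepwise graph combined with characteristic-polynomial and eigenvalue estimates), not a routine exchange argument. As written, your plan assumes away the hard part, so the proof is incomplete.
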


Friedland \cite{Friedland1985} confirmed the conjecture for some special cases. Finally, the conjecture was proved by Rowlinson \cite{Rowlinson1988}. However, the Brualdi-Hoffman problem will become more difficult if adding some constraints, such as the size and order are both fixed. Brualdi and Solheid \cite{Brualdi1986} considered the maximal spectral radius of connected graphs of order $n$ and size $m=n+k$, where $k\geq 0$. If $0\leq k\leq 5$ and $n$ is sufficiently large, it was proved that the extremal graph is the graph obtained from the star $K_{1,n-1}$ by adding edges from a pendant vertex to the other pendant vertices. Brualdi and Solheid conjectured that the above conclusion also holds for all $k$, when $n$ is sufficiently large with respect to $k$. Later, Cvetkovi\'{c} and Rowlinson \cite{Cvetkovic1988} gave an affirmative answer for the conjecture of Brualdi and Solheid. However, for any positive integers $m$ and $n$,
the Brualdi-Hoffman problem of graphs of order $n$ and size $m$ is still open. In order to determine the extremal graph in the Brualdi-Hoffman problem under the constraint of order, many reports about the analysis of the extremal graph were presented (see, for example, \cite{Andelic2010,Andelic2011,Bhattacharya2008,Petrovic2015,Simic2004,Simic2010}).
Therefore, it is interesting to investigate the Brualdi-Hoffman problem under additional constraints.

Matching theory is a basic subject in graph theory, and it has many important applications in theoretical chemistry and combinatorial optimization \cite{Lovasz1986}. Recall that a matching in a graph is a set of pairwise nonadjacent edges. The number of edges in a maximum matching of a graph $G$ is called the matching number of $G$, and denoted by $\beta(G)$. The matching number of a graph is closed related to the spectral parameters. In \cite{O2010}, O and Cioab\v{a} presented the connections between the eigenvalues and the matching number of a regular graph. Cioab\v{a} and Gregory \cite{Cioaba2007} obtained some spectral sufficient conditions for the existence of large matching in regular graphs. A graph contains a perfect matching if its matching number is half of the order. Some spectral sufficient conditions, which guarantee the graph has a perfect matching, were proved in \cite{Brouwer2005,Cioaba2005,Cioaba2009}.
The matching number and ($Q$-) spectral radius of graphs were investigated in many papers (see \cite{Chang2003,Feng2007a,Feng2007,Hou2002,Lin2007,Li2014,Shen2017,Yu2008}).
Inspired by these observations, we consider the Brualdi-Hoffman problem with the additional matching constrain, that is,
\begin{prob}\label{problem}
  For graphs of size $m$ and matching number $\beta$, what is the maximum of the $Q$-spectral radius?
\end{prob}
We remark that Problem \ref{problem} is trivial when $\beta=1$. Let $G$ be a graph with size $m$ and matching number $\beta=1$. Thus, $G$ is a star or a triangle (only for $m=3$) with possibly some isolated vertices.
In particular, if $m=3$, then the maximum of the $Q$-spectral radius is $q(K_{1,3})=q(K_3)=4$. If $m\neq 3$, then the maximum of the $Q$-spectral radius is $q(K_{1,m})=m+1$.
Let $S_{a,b,c}$ denote the graph obtained from a vertex $v_1$ by attaching $a$ pendant edges, $b$ pendant paths of length $2$
and $c$ pendant triangles (see \textcolor[rgb]{0.00,0.07,1.00}{Fig. \ref{fig1}}).
When $\beta\geq 2$, Problem \ref{problem} is solved in the following theorem.

\begin{thm} \label{th1}
Let $G$ be a graph of matching number $\beta\geq2$ and size $m\geq \beta$.
Then $q(G)\leq q(S_{a,b,c})$,
with equality if and only if $G\cong S_{a,b,c}$ with possibly some isolated edges and isolated vertices.
Moreover,\\
(i) if $m\geq 3\beta-1$, then $a=m-3\beta+3$, $b=0$ and $c=\beta-1$;\\
(ii) if $m\leq 3\beta-2$ and $m-\beta$ is odd, then $a=b=1$ and $c=\frac{m-\beta-1}2$;\\
(iii) if $m\leq 3\beta-2$ and $m-\beta$ is even, then $a=1$, $b=0$ and $c=\frac{m-\beta}2$.
\end{thm}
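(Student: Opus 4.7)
The plan is a two-stage attack. The first stage is a structural reduction showing every extremal graph is $S_{a,b,c}$ together with isolated edges and isolated vertices. The second stage solves an optimization over the parameters $(a,b,c)$ subject to the constraints on $m$ and $\beta$.

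For the structural reduction, let $G$ be extremal, let $H$ be the component attaining $q(H)=q(G)$, let $x$ be a positive Perron eigenvector of $Q(H)$, and let $v_1\in V(H)$ maximize $x_v$. Note that $x_v=0$ outside $H$, since $H$ has the largest $q$ among components. I would prove successively: (a) every other component of $G$ is an isolated edge or vertex, because otherwise one can detach a pendant edge $xy$ of a non-trivial component and add the edge $v_1x$, which preserves $m$ (and $\beta$ after a compensating adjustment on the isolated-edge side) while strictly increasing $q$ via the Rayleigh estimate $q(G')-q(G)\ge \frac{1}{\|x\|^{2}}\bigl[(x_{v_1}+x_{x})^{2}-(x_x+x_y)^{2}\bigr]=\frac{x_{v_1}^{2}}{\|x\|^{2}}>0$; (b) every non-$v_1$ vertex of $H$ has degree at most $2$, by the edge-switch $uw\mapsto v_1w$ whenever $d(u)\ge 3$ and $w\ne v_1$; (c) every vertex at distance $2$ from $v_1$ has degree $1$, which together with (b) and connectedness of $H$ forces every vertex of $H$ to lie within distance $2$ of $v_1$. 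Consequently each neighbor of $v_1$ is either a pendant leaf, the middle of a $P_2$, or is joined to another neighbor of $v_1$ (forming a triangle through $v_1$); hence $H\cong S_{a,b,c}$.

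For the optimization, write $t$ for the number of isolated edges of $G$, so the constraints read $m=a+2b+3c+t$ and $\beta=\min(a,1)+b+c+t$. I would first rule out $a=0$: the move that deletes an isolated edge $xy$ and adds $v_1x$ keeps $m$ and $\beta$ while strictly increasing $q$. With $a\ge 1$, one has $a=m-\beta+1-b-2c$ and $t=\beta-1-b-c$. Two parameter-preserving transformations drive the rest of the analysis: $(a,c,t)\mapsto(a-2,c+1,t-1)$ when $a\ge 3$ and $t\ge 1$, and $(a,b,t)\mapsto(a-1,b+1,t-1)$ when $a\ge 2$ and $t\ge 1$. Each move strictly increases $q$, as can be verified from the scalar eigenvalue equation
\[
q-(a+b+2c)=\frac{a}{q-1}+\frac{b(q-1)}{q^{2}-3q+1}+\frac{2c}{q-3}
\]
obtained by substituting $x_{v_1}=1$ and the explicit forms of $x_{\text{leaf}}$, $x_{P_2\text{-mid}}$, $x_{P_2\text{-leaf}}$, $x_{\text{tri}}$ into the eigenvector equation at $v_1$. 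Iterating these moves until neither applies yields precisely the three terminal configurations: $t=0$ with $b=0$, $c=\beta-1$ when $t$ vanishes first (case (i), $m\ge 3\beta-1$); $a=1$, $b=1$, $c=(m-\beta-1)/2$ when $a$ reaches its floor first with $m-\beta$ odd (case (ii)); and $a=1$, $b=0$, $c=(m-\beta)/2$ when $a$ reaches its floor first with $m-\beta$ even (case (iii)).

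The main obstacle will be step (b) of the structural reduction, since the naive switch $uw\mapsto v_1 w$ can decrease $\beta$ by $1$. This will need to be paired either with a matching-reroute through an alternating path (using the Gallai--Edmonds decomposition of $H$ to track whether $u$, $v_1$, and $w$ lie in essential or deficiency classes) or with a simultaneous modification of an isolated edge in $G$, so that $\beta$ stays fixed and $m$ is unchanged. A similar but milder bookkeeping is needed in step (a) and in the elimination of the $a=0$ case. I expect this matching-preserving bookkeeping, and the sub-case split based on whether $v_1u\in E(H)$ and whether $uw$ belongs to some maximum matching, to be where most of the technical effort goes.
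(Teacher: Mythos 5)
Your two-stage outline parallels the paper, but both stages have genuine gaps as written. In the structural stage, the step you defer as ``matching-preserving bookkeeping'' is the central difficulty rather than a technicality, and your plan would fail even before reaching it: the switch $uw\mapsto v_1w$ needs $v_1w\notin E$, and when every neighbour of $u$ other than $v_1$ is already adjacent to $v_1$ (e.g.\ $K_4$, $K_4-e$ with attachments, or a book of triangles sharing an edge at $v_1$) no admissible target exists, while switching such an edge to a pendant vertex at $v_1$ makes the Rayleigh comparison point the wrong way. So dense configurations cannot be eliminated by any of your local moves; the paper kills them with two devices you have no counterpart for: for $m\ge\beta+5$ a global comparison of the Perron vectors of two auxiliary graphs that forces $q(G_2)<7$ while $d_{G_2}(v_1)\ge6$ forces $q(G_2)\ge7$ (Theorem \ref{th2}), and for $m\le\beta+4$ explicit spectral comparisons $q(H_1)=q(H_2)=3+\sqrt5<q(S_{2,0,1})$, $q(K_4)=6<q(S_{3,0,1})$ and $q(H_4)\approx5.9452<6$ (Theorem \ref{th3}). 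On the matching side, note also that in your fixed-$\beta$ class a switch that \emph{increases} the matching number already voids the contradiction, not only one that decreases it; the paper sidesteps both directions by maximizing over the relaxed class $\mathfrak{G}_{m,\ge\beta}$ and choosing a matching extremal with respect to the Perron vector (Definition \ref{de1}, Lemmas \ref{le2}--\ref{le3}), so that each rewiring visibly retains $\beta$ independent edges, and only at the end proves $\beta(G^*)=\beta$. Without this relaxation your proposed Gallai--Edmonds rerouting must be carried out inside every single switch, and you have not shown it can be.

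In the optimization stage your two moves do preserve $m$ and $\beta$ and do increase $q$, but they never decrease $b$ and never trade $a$ against $c$ once $t=0$, so the terminal set is much larger than the three configurations you claim: for instance, with $m=3\beta-1$ the configuration $(a,b,c,t)=(4,2,\beta-3,0)$ admits neither move yet is not extremal. You need further $q$-increasing transformations, exactly those in the paper's final argument: two pendant paths into a pendant triangle plus an isolated edge (the switch $G''=G^*-u_2v_2-u_3v_3+u_2u_3+v_2v_3$, justified by an eigenvector inequality of the type (\ref{eq7})), and, when $a\ge2$, a pendant path plus a pendant edge into a pendant triangle, each verified against the Perron vector rather than read off the scalar equation at $v_1$. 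Until the move set is completed and each move's admissibility (including the matching constraint) is checked, the parameter determination in (i)--(iii) does not follow.
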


The proof of Theorem \ref{th1} is provided in the next section. Before proceeding further, let us recall some definitions and notations.
Let $G$ be a graph with signless Laplacian matrix $Q(G)$ and $Q$-spectral radius $q(G)$. It is well-known that
$$q(G)=\max_{||Y||=1}Y^{T}Q(G)Y=X^{T}Q(G)X=\sum_{uv\in E(G)}(x_{u}+x_{v})^{2},$$
where such nonnegative unit eigenvector $X$ is called the principal eigenvector of $Q(G)$.

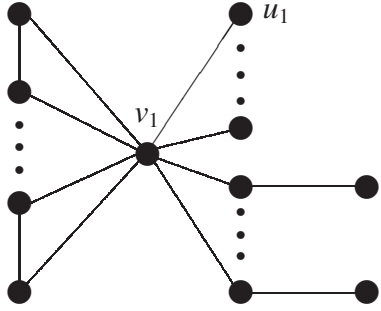
\begin{figure}[t]
\centering \setlength{\unitlength}{2.4pt}
\begin{center}
\unitlength 0.8mm 
\linethickness{0.4pt}
\ifx\plotpoint\undefined\newsavebox{\plotpoint}\fi 
\begin{picture}(59.368,49.236)(0,0)
\put(22.368,24.368){\circle*{4}}
\put(22.368,30.368){\makebox(0,0)[cc]{$v_1$}}
\put(43.868,47.618){\makebox(0,0)[cc]{$u_1$}}
\multiput(22.368,24.368)(-.0337078652,.0389245586){623}{\line(0,1){.0389245586}}
\put(1.118,48.618){\line(0,-1){13.75}}
\multiput(1.118,34.868)(.0666144201,-.0336990596){319}{\line(1,0){.0666144201}}
\put(1.118,47.618){\circle*{4}}
\put(1.118,34.618){\circle*{4}}
\multiput(22.118,24.9)(-.0690789474,-.0317171053){304}{\line(-1,0){.0690789474}}
\put(1.118,16.118){\line(0,-1){13.5}}
\multiput(1.118,1.368)(.0337301587,.0369047619){630}{\line(0,1){.0369047619}}
\put(1.118,16.118){\circle*{4}}
\put(1.118,1.368){\circle*{4}}
\put(1.118,29.118){\circle*{1.2}}
\put(1.118,25.368){\circle*{1.2}}
\put(1.118,21.618){\circle*{1.2}}
\put(22.368,24.618){\line(2,3){15.5}}
\put(37.868,47.618){\circle*{4}}
\put(37.868,41.868){\circle*{1.2}}
\put(37.868,33.118){\circle*{1.2}}
\put(37.868,37.368){\circle*{1.2}}
\multiput(22.368,24.868)(.140625,.033482143){112}{\line(1,0){.140625}}
\put(37.868,28.618){\circle*{4}}
\multiput(22.368,24.118)(.094551282,-.033653846){156}{\line(1,0){.094551282}}
\put(37.868,18.868){\line(1,0){21}}
\put(37.868,18.868){\circle*{4}}
\put(58.868,18.868){\circle*{4}}
\multiput(22.118,25.118)(.0336956522,-.052173913){460}{\line(0,-1){.052173913}}
\put(37.868,1.118){\line(1,0){21}}
\put(58.868,1.368){\circle*{4}}
\put(37.868,1.368){\circle*{4}}
\put(37.868,14.618){\circle*{1.2}}
\put(37.868,10.868){\circle*{1.2}}
\put(37.868,7.368){\circle*{1.2}}
\end{picture}
\end{center}
\caption{\footnotesize{The graph $S_{a,b,c}$, where $a,b,c\geq0$}.}\label{fig1}
\end{figure}

\section{Proofs}

Let $\mathfrak{G}_{m,~\geq\beta}$ be the set of graphs of size $m$ with at least $\beta$ independant edges.
In order to prove \textcolor[rgb]{0.00,0.07,1.00}{Theorem \ref{th1}},
we first look for the extremal graph with maximal $Q$-spectral radius among all graphs in $\mathfrak{G}_{m,~\geq\beta}$.

\begin{lem}\label{le1}
Let $G$ be a non-empty graph and $X$ be the principal eigenvector of $Q(G)$ with coordinate $x_v$ corresponding to $v\in V(G)$.
Assume that $u_1u_2\in E(G)$ and $v_1v_2\notin E(G)$.
If $x_{v_1}+x_{v_2}\geq x_{u_1}+x_{u_2}$ and $x_{v_1}+x_{v_2}>0$, then $q(G-u_1u_2+v_1v_2)>q(G)$.
\end{lem}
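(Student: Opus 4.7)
The plan is to test the Rayleigh--Ritz bound $q(G') \geq Y^{T} Q(G') Y$ (valid for every unit vector $Y$) with $Y = X$, where $X$ is the principal eigenvector of $Q(G)$ and $G' = G - u_1u_2 + v_1v_2$. Using the edge-sum identity $Y^{T} Q(H) Y = \sum_{uv \in E(H)}(y_u+y_v)^2$, the only two edges in which $G$ and $G'$ differ give
\begin{equation*}
X^{T} Q(G') X - X^{T} Q(G) X = (x_{v_1}+x_{v_2})^2 - (x_{u_1}+x_{u_2})^2.
\end{equation*}
Since $X$ is nonnegative, $x_{u_1}+x_{u_2} \geq 0$, and the hypothesis $x_{v_1}+x_{v_2} \geq x_{u_1}+x_{u_2}$ forces $(x_{v_1}+x_{v_2})^2 \geq (x_{u_1}+x_{u_2})^2$. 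Therefore $q(G') \geq X^{T} Q(G') X \geq X^{T} Q(G) X = q(G)$.

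To upgrade to a strict inequality I would argue by contradiction. Suppose $q(G') = q(G)$. Then $X$ attains the Rayleigh--Ritz maximum for the symmetric matrix $Q(G')$ and hence is itself an eigenvector of $Q(G')$ for $q(G')$. Combined with $Q(G)X = q(G)X = q(G')X$, this yields $(Q(G')-Q(G))X = 0$. Because $u_1u_2 \in E(G)$ while $v_1v_2 \notin E(G)$, the two edges are distinct, so at least one endpoint of $v_1v_2$—say $v_1$—lies outside $\{u_1, u_2\}$. The row of $Q(G')-Q(G)$ at $v_1$ is then untouched by the deletion of $u_1u_2$ and records only the insertion of $v_1v_2$ (a $+1$ on the diagonal and a $+1$ in column $v_2$), so
\begin{equation*}
0 = \bigl((Q(G')-Q(G))X\bigr)_{v_1} = x_{v_1}+x_{v_2},
\end{equation*}
contradicting the hypothesis $x_{v_1}+x_{v_2} > 0$.

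The one subtlety I anticipate is the choice of coordinate in the final step: if one probes a vertex that coincides with $u_1$ or $u_2$, the row formula picks up extra terms from the deleted edge and no contradiction emerges. The remedy is simply to select the endpoint of the new edge that lies outside $\{u_1,u_2\}$, which exists precisely because $\{u_1,u_2\}\neq\{v_1,v_2\}$. Everything else is a routine Rayleigh-quotient manipulation together with the standard fact that vectors achieving equality in Rayleigh--Ritz are eigenvectors for the largest eigenvalue.
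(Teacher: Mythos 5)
Your proof is correct and follows essentially the same route as the paper: the Rayleigh--Ritz comparison via the edge-sum identity for the weak inequality, then, in the equality case, noting $X$ must be a principal eigenvector of $Q(G')$ and probing the eigen-equation at the endpoint of the new edge lying outside $\{u_1,u_2\}$ to contradict $x_{v_1}+x_{v_2}>0$. Your row-of-the-difference-matrix formulation is just a matrix restatement of the paper's coordinate-wise eigenvalue equation, including the same caveat about choosing the right endpoint.
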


\begin{proof}
For convenience, let $G'=G-u_1u_2+v_1v_2$. Then
$$q(G')-q(G)\geq X^TQ(G')X-X^TQ(G)X=(x_{v_1}+x_{v_2})^2-(x_{u_1}+x_{u_2})^2\geq0.$$
Thus, $q(G')\geq q(G).$
If $q(G')=q(G),$
then $q(G')=X^TQ(G')X$ and hence $X$ is also the principal eigenvector of $Q(G')$.
We may assume that $v_2\notin\{u_1,u_2\}$, since $\{v_1,v_2\}\neq \{u_1,u_2\}$.
Then $d_{G'}(v_2)=d_G(v_2)+1$.
Note that $q(G)x_v=d_{G}(v)x_v+\sum_{w\in N_{G}(v)}x_w$ for any $v\in V(G)$.
Thus,
$$q(G')x_{v_2}=d_{G'}(v_2)x_{v_2}+\sum_{w\in N_{G'}(v_2)}x_w=
d_{G}(v_2)x_{v_2}+\sum_{w\in N_{G}(v_2)}x_w+x_{v_1}+x_{v_2}>q(G)x_{v_2},$$
which contradicts $q(G')=q(G)$.
Therefore, $q(G')>q(G)$.
\end{proof}

We now introduce a special matching of a non-empty graph $G$.

\begin{defi}\label{de1}
Let $G$ be a non-empty graph and $X$ be the principal eigenvector of $Q(G)$ with coordinate $x_v$ corresponding to $v\in V(G)$.
A matching $\{u_1v_1, u_2v_2,\ldots, u_{\beta(G)} v_{\beta(G)}\}$ of $G$ is said to be extremal to $X$ and denoted by $M^*(G)$,
if $$\sum_{u_iv_i\in M^*(G)}(x_{u_i}+x_{v_i})^2=\max_M\sum_{uv\in M}(x_u+x_v)^2,$$
where $M$ takes over all the maximum matchings of $G$.
\end{defi}

Let $G^*$ be the extremal graph with maximal $Q$-spectral radius among all graphs in $\mathfrak{G}_{m,~\geq\beta}$,
and $X^*$ be the principal eigenvector of $Q(G^*)$ with coordinate $x^*_v$ corresponding to $v\in V(G^*)$.
The property of the extremal matching of $G^*$ is obtained.

\begin{lem}\label{le2}
Let $M^*(G^*)=\{u_1v_1, u_2v_2,\ldots, u_{\beta(G^*)} v_{\beta(G^*)}\}$ and $V^*=\{u_i,v_i~|~i=1,2,\ldots, \beta(G^*)\}$.
Then $x^*_w\leq \min_{v\in V^*}x^*_v$ for any vertex $w\in V(G^*)\setminus V^*$.
\end{lem}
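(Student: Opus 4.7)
The proof is by contradiction, so suppose there exist $w\in V(G^*)\setminus V^*$ and $v\in V^*$ with $x^*_w>x^*_v$. The vertex $v$ is one endpoint of some matched edge $u_jv_j\in M^*(G^*)$; by relabelling the two endpoints of this matched edge we may assume $v=v_j$, so $x^*_w>x^*_{v_j}$. Since $X^*$ is nonnegative, this forces $x^*_w>0$. The strategy is to show that, in either case depending on whether $u_jw$ is an edge of $G^*$, one of two extremality conditions is violated: the extremality of $G^*$ in $\mathfrak{G}_{m,\,\geq\beta}$, or the extremality of the matching $M^*(G^*)$ in Definition~\ref{de1}.

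If $u_jw\notin E(G^*)$, I would apply Lemma~\ref{le1} with $u_1u_2=u_jv_j$ and $v_1v_2=u_jw$: the hypotheses $x^*_{u_j}+x^*_w\geq x^*_{u_j}+x^*_{v_j}$ and $x^*_{u_j}+x^*_w>0$ hold at once, yielding $q(G^*-u_jv_j+u_jw)>q(G^*)$. The switched graph still has size $m$, and replacing $v_j$ by $w$ in $M^*(G^*)$ (which is legal because $w\notin V^*$) produces a matching of size $\beta(G^*)\geq\beta$, so the switched graph lies in $\mathfrak{G}_{m,\,\geq\beta}$, contradicting the choice of $G^*$.

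If $u_jw\in E(G^*)$, Lemma~\ref{le1} is unavailable; instead I would use the extremality of $M^*(G^*)$ itself. Since $w\notin V^*$, the set $M'=(M^*(G^*)\setminus\{u_jv_j\})\cup\{u_jw\}$ is a maximum matching of $G^*$, and the inequality $\sum_{e\in M^*(G^*)}(x^*_u+x^*_v)^2\geq\sum_{e\in M'}(x^*_u+x^*_v)^2$ given by Definition~\ref{de1} reduces, after cancelling all common terms, to $(x^*_{u_j}+x^*_{v_j})^2\geq(x^*_{u_j}+x^*_w)^2$, whence $x^*_{v_j}\geq x^*_w$; this contradicts the standing assumption.

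I expect the only non-automatic point to be recognizing the need for this case split and routing Case~2 through Definition~\ref{de1} rather than through Lemma~\ref{le1}; in Case~1 one also has to verify that the edge switch preserves membership in $\mathfrak{G}_{m,\,\geq\beta}$, but this is immediate once $w\notin V^*$ is used to rebuild the matching. Everything else is a direct substitution into hypotheses of results already proven.
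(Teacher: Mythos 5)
Your proof is correct and takes essentially the same route as the paper: the identical case split on whether $w$ is adjacent to the matched partner of the violated vertex, with the adjacent case refuted by the extremality of $M^*(G^*)$ from Definition~\ref{de1} and the non-adjacent case by the edge switch plus Lemma~\ref{le1}, contradicting the maximality of $q(G^*)$ in $\mathfrak{G}_{m,\,\geq\beta}$. The only cosmetic difference is that the paper first fixes the vertex of $V^*$ attaining the minimum (its $u_1$, with partner $v_1$), whereas you argue directly from an arbitrary counterexample pair.
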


\begin{proof}
Without loss of generality, we may assume that $x^*_{u_1}=\min_{v\in V^*}x^*_v$.
Let $w$ be an arbitrary vertex in $V(G^*)\setminus V^*$. It suffices to show that $x^*_w\leq x^*_{u_1}$.
Suppose to the contrary that $x^*_w>x^*_{u_1}$.
If $w$ is adjacent to $v_1$ in $G^*$, then
$$\sum_{u_iv_i\in M^*(G)}(x^*_{u_i}+x^*_{v_i})^2<\sum_{uv\in (M^*(G^*)\setminus \{u_1v_1\})\cup \{wv_1\}}(x^*_u+x^*_v)^2,$$
which contradicts the definition of $M^*(G^*)$. Thus, $wv_1\notin E(G^*)$.

Let $G=G^*-u_1v_1+wv_1$. Clearly, $G\in\mathfrak{G}_{m,~\geq\beta}$. Since $X^*$ is a nonnegative vector, we have $x^*_w>x^*_{u_1}\geq0$, and so $x^*_w+x^*_{v_1}>x^*_{u_1}+x^*_{v_1}$.
It follows from \textcolor[rgb]{0.00,0.07,1.00}{Lemma \ref{le1}} that $q(G)>q(G^*)$, which contradicts the maximality of $q(G^*)$.
This completes the proof.
\end{proof}

Let $M^*(G^*)$ and $V^*$ be the sets defined in \textcolor[rgb]{0.00,0.07,1.00}{Lemma \ref{le2}}.
Set $x^*_{v_1}=\max_{v\in V^*}x^*_v$. We define two edge subsets $E_1(G^*)$ and $E_2(G^*)$ of $G^*$,
where $E_1(G^*)=M^*(G^*)\cup \{v_1v~|~v\in N_{G^*}(v_1)\}$ and $E_2(G^*)=E(G^*)\setminus E_1(G^*).$
The following theorem gives a preliminary characterization of the extremal graph $G^*$.

\begin{thm}\label{th2}
If $m(G^*)\geq \beta(G^*)+5$, then $d_{G^*}(u_1)=1$, and $G^*$ is isomorphic to $S_{a,b,c}$
with possibly some isolated edges and isolated vertices.
\end{thm}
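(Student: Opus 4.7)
The strategy is to repeatedly apply the edge-swap principle of Lemma~\ref{le1}: any swap $G^*-u_1u_2+v_1v_2$ that keeps the graph in $\mathfrak{G}_{m,\geq\beta}$ and satisfies $x^*_{v_1}+x^*_{v_2}\ge x^*_{u_1}+x^*_{u_2}$ together with $x^*_{v_1}+x^*_{v_2}>0$ strictly increases $q$, contradicting the maximality of $q(G^*)$. Combined with Lemma~\ref{le2}, which bounds $x^*$ on $V(G^*)\setminus V^*$ by $\min_{v\in V^*}x^*_v$, the choice of $v_1$ as the maximizer of $x^*$ over $V^*$ makes $v_1$ the global maximizer of $x^*$ on $V(G^*)$. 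These are the only tools I need.

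\textbf{Phase 1: $d_{G^*}(u_1)=1$.} Suppose for contradiction that $u_1$ has a neighbor $z\ne v_1$. If $v_1z\notin E(G^*)$, form $G'=G^*-u_1z+v_1z$. Since $u_1z\notin M^*(G^*)$ (the partner of $u_1$ is $v_1$, not $z$), $M^*(G^*)$ remains a matching of $G'$, so $G'\in\mathfrak{G}_{m,\geq\beta}$. Then $x^*_{v_1}+x^*_z\ge x^*_{u_1}+x^*_z>0$ and Lemma~\ref{le1} yields $q(G')>q(G^*)$, contradicting extremality. Otherwise $v_1z\in E(G^*)$ for every such $z$, meaning $u_1v_1z$ is a triangle and $N(u_1)\setminus\{v_1\}\subseteq N(v_1)$; here I would locate a non-neighbor $y$ of $v_1$ and show that swapping $u_1z$ for $v_1y$ again produces the contradiction. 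Existence of a suitable $y$ uses the hypothesis $m(G^*)\ge\beta(G^*)+5$ (to prevent $v_1$ from being universal and to afford slack), together with Lemma~\ref{le2} and the extremality of $M^*(G^*)$ (comparing $M^*(G^*)$ with alternating matchings through $u_1,v_1,z$) to ensure $x^*_{v_1}+x^*_y\ge x^*_{u_1}+x^*_z$.

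\textbf{Phase 2: structure of $G^*$.} With $d(u_1)=1$ established, I would classify each edge of $G^*$ by its incidence with $v_1$, aiming to show every edge not incident to $v_1$ falls into one of three types: (i) a pendant-path edge $wz$ with $w\in N(v_1)$ of degree two and $z$ a leaf; (ii) a pendant-triangle edge $pq$ with both $p,q\in N(v_1)$ and $v_1pq$ a triangle of $G^*$; or (iii) an isolated edge in a component disjoint from $v_1$. Any other configuration admits a weight-improving swap: a path of length $\ge 3$ hanging off $v_1$, a vertex of degree $\ge 3$ in $N(v_1)$, a triangle sharing no vertex with $v_1$, or any more complicated component containing a matching edge can be rearranged by removing a non-$v_1$-incident edge and adding a $v_1$-incident non-edge, where the weight comparison is again driven by $v_1$ being the global eigenvector maximizer and by Lemma~\ref{le2}. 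Collecting the allowed configurations reproduces precisely $S_{a,b,c}$ together with possibly some isolated edges and isolated vertices.

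\textbf{Main obstacle.} The most delicate step is the triangle case in Phase 1 and, throughout Phase 2, the bookkeeping needed to guarantee matching preservation: whenever a swap removes a matching edge, one must exhibit an alternate matching of size at least $\beta$, which is exactly where the extremality of $M^*(G^*)$ and the size lower bound $m\ge\beta+5$ play their decisive roles.
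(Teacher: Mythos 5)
Your plan reduces the theorem to a sequence of single-edge swaps justified by Lemma~\ref{le1} and Lemma~\ref{le2}, but this stalls at exactly the decisive step, and the missing idea is the actual content of the paper's proof. The theorem amounts to showing $E_2(G^*)=\emptyset$, i.e.\ every edge lies in $M^*(G^*)$ or is incident to $v_1$ (once this holds, $d_{G^*}(u_1)=1$ and the $S_{a,b,c}$ structure are immediate). To kill an edge $uv\in E_2(G^*)$ by your method you must find a non-neighbor $y$ of $v_1$ with $x^*_{v_1}+x^*_y\geq x^*_u+x^*_v$. Lemma~\ref{le2} gives only an \emph{upper} bound on entries outside $V^*$, never a lower bound on $x^*_y$, and both endpoints $u,v$ may be matched vertices whose entries are close to $x^*_{v_1}$ (e.g.\ $uv=u_1v_2$ or $v_2v_3$), so the right-hand side can approach $2x^*_{v_1}$ while every admissible target $y$ has a tiny entry; ``$v_1$ is the global maximizer'' does not yield the inequality. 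This is not a repairable bookkeeping issue: the paper's Lemma~\ref{le4} shows that when $E_2(G^*)\neq\emptyset$ the graph contains one of the swap-stable configurations $H_1$, $H_2$, which is precisely why the case $m\leq\beta+4$ needs a separate argument (Theorem~\ref{th3}, via explicit comparisons with $S_{2,0,1}$ and $S_{3,0,1}$) rather than further swaps. Your Phase 1 first case (a neighbor $z$ of $u_1$ with $v_1z\notin E(G^*)$) is fine, but the triangle case you defer is exactly an instance of this obstruction, and in the paper $d_{G^*}(u_1)=1$ is never proved directly--it is a corollary of $E_2(G^*)=\emptyset$.

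The paper's mechanism for $m\geq\beta+5$ is genuinely different and global: add $|E_2|$ isolated vertices, let $G_1$ be $G^*$ with these added, and let $G_2$ be obtained by deleting \emph{all} of $E_2$ and attaching $|E_2|$ pendant edges at $v_1$. One then compares the two graphs through \emph{both} principal eigenvectors via the identity ${X_1}^TX_2[q(G_2)-q(G_1)]={X_1}^T[Q(G_2)-Q(G_1)]X_2$. Since every vertex of $G_2$ other than $v_1$ has degree at most $2$, its eigenvector entries are at most $x_{v_1}/(q(G_2)-3)$, while the new pendant vertices carry $x_{v_1}/(q(G_2)-1)$; maximality of $q(G^*)$ then forces $q(G_2)^2-7q(G_2)+4\leq0$, i.e.\ $q(G_2)<7$. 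But $m\geq\beta+5$ makes $d_{G_2}(v_1)\geq6$, so $q(G_2)\geq q(K_{1,6})=7$, a contradiction. Note this also corrects your reading of the hypothesis: $m\geq\beta(G^*)+5$ is not about preventing $v_1$ from being universal or giving ``slack'' for targets--it is used solely to force $d_{G_2}(v_1)\geq6$ in this final spectral comparison. Without this (or some comparable quantitative device to handle $E_2$-edges joining two high-entry vertices), the proposal has a genuine gap.
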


\begin{proof}
Recall that $x^*_{v_1}=\max_{v\in V^*}x^*_v$.
By \textcolor[rgb]{0.00,0.07,1.00}{Lemma \ref{le2}},
we have $x^*_{v_1}=\max_{v\in V(G^*)}x^*_v$.
For convenience, let $E_i=E_i(G^*)$ for $i\in\{1,2\}$.
If $E_2=\emptyset$, then the statement holds immediately.
Suppose that $E_2\neq\emptyset$. Let us define two graphs $G_1$ and $G_2$ as follows:

(i) add isolated vertices $w_1,w_2,\ldots,w_{|E_2|}$ to $G^*$
such that $V(G_1)=V(G_2)=V(G^*)\cup\{w_i~|~i=1,2,\ldots,|E_2|\};$

(ii) $E(G_1)=E_1\cup E_2$ and $E(G_2)=E_1\cup E_2'$, where $E_2'=\{v_1w_i~|~i=1,2,\ldots,|E_2|\}.$ \\
Let $X_1=({X^*}^T,0,0,\ldots,0)^T$,
where the number of extended zero-components is $|E_2|$.
Clearly, $q(G_1)=q(G^*)$ and $X_1$ is the principal eigenvector of $Q(G_1)$.
Let $X_2$ be the principal eigenvector of $Q(G_2)$ with coordinate $x_v$ corresponding to $v\in V(G_2)$.
Then
\begin{eqnarray*}
  {X_1}^TX_2[q(G_2)-q(G_1)] &=& {X_1}^Tq(G_2)X_2-[q(G_1)X_1]^TX_2\\
  &=& {X_1}^TQ(G_2)X_2-[Q(G_1)X_1]^TX_2\\
  &=& {X_1}^T[Q(G_2)-Q(G_1)]X_2.
\end{eqnarray*}
Thus,
\begin{eqnarray}\label{eq1}
{X_1}^TX_2[q(G_2)-q(G_1)]=\sum_{v_1w_i\in E_2'}(x^*_{v_1}+0)(x_{v_1}+x_{w_i})-\sum_{uv\in E_2}(x^*_u+x^*_v)(x_u+x_v).
\end{eqnarray}
Note that each $w_i$ is a pendant vertex of $G_2$. Then $q(G_2)x_{w_i}=x_{w_i}+x_{v_1},$
that is,
\begin{eqnarray}\label{eq2}
x_{w_i}=\frac{x_{v_1}}{q(G_2)-1}
\end{eqnarray}
for $i=1,2,\ldots, |E_2|$.
On the other hand, we have
\begin{eqnarray}\label{eq3}
x^*_u+x^*_v\leq 2x^*_{v_1}
\end{eqnarray}
for any $uv\in E_2$, since $x^*_{v_1}=\max_{v\in V(G^*)}x^*_v$.
Moreover, we can see that $d_{G_2}(v)\leq2$ for any $v\in V(G_2)\setminus \{v_1\}$.
Let $x_{v^*}=\max_{v\in V(G_2)\setminus \{v_1\}}x_v$.
Then $$q(G_2)x_{v^*}=d_{G_2}(v^*)x_{v^*}+\sum_{v\in N_{G_2}(v^*)}x_v\leq 3x_{v^*}+x_{v_1},$$
that is, $x_{v^*}\leq\frac{x_{v_1}}{q(G_2)-3}$.
This implies that
\begin{eqnarray}\label{eq4}
x_u+x_v\leq 2x_{v^*}\leq\frac{2x_{v_1}}{q(G_2)-3}
\end{eqnarray}
for any $uv\in E_2$.
Combining (\ref{eq1}), (\ref{eq2}), (\ref{eq3}) and (\ref{eq4}), we have
\begin{eqnarray}\label{eq5}
{X_1}^TX_2[q(G_2)-q(G_1)]\geq[\frac{q(G_2)}{q(G_2)-1}-\frac{4}{q(G_2)-3}]|E_2|x^*_{v_1}x_{v_1}.
\end{eqnarray}
By the definition of $G_2$,  we can see that $m(G_2)=m(G^*)$ and $\beta(G_2)=\beta(G^*)$.
Thus, $G_2$ also belongs to $\mathfrak{G}_{m,~\geq\beta}$.
Furthermore, we have $q(G_2)\leq q(G^*)=q(G_1)$, since $q(G^*)$ is maximal.
Note that $x^*_{v_1}x_{v_1}>0$,
since $x^*_{v_1}=\max_{v\in V(G^*)}x^*_v$
and $v_1$ belongs to the unique connected component of $G_2$ other than an isolated vertex or edge.
By (\ref{eq5}), we have
$\frac{q(G_2)}{q(G_2)-1}\leq\frac{4}{q(G_2)-3}$, i.e., $q^2(G_2)-7q(G_2)+4\leq0$.
Thus,
\begin{eqnarray}\label{eq6}
q(G_2)\leq\frac{7+\sqrt{33}}2<7.
\end{eqnarray}
Recall that $m(G^*)\geq\beta(G^*)+5$, that is, $|E(G^*)\setminus M^*(G^*)|\geq5$.
Note that $u_1v_1\in E(G_2)$ and all the edges of $E(G^*)\setminus M^*(G^*)$ become edges incident to $v_1$ in $G_2$.
It follows that $d_{G_2}(v_1)\geq6$. This implies that $q(G_2)\geq q(K_{1,6})=7$,
which contradicts (\ref{eq6}).
Therefore, $E_2=\emptyset$ and the statement holds.
\end{proof}

In the following, we consider the case $m(G^*)\leq \beta(G^*)+4$.
We now give an ordering of the edges in an extremal matching of $G^*$.

\begin{defi}\label{de2}
An ordering $u_1v_1, u_2v_2, \ldots, u_{\beta(G^*)} v_{\beta(G^*)}$, of the edges in $M^*(G^*)$,
is said to be proper to $X^*$, if it satisfies the following conditions for each $1\leq i\leq \beta(G^*)$:\\
(i) $x^*_{v_i}\geq x^*_{u_i}$;\\
(ii) $x^*_{v_i}\geq x^*_{v_{i+1}}$;\\
(iii) $x^*_{u_i}\geq x^*_{u_{i+1}}$ if $x^*_{v_i}=x^*_{v_{i+1}}$.
\end{defi}

In the following, we may assume that $u_1v_1,u_2v_2,\ldots,u_{\beta(G^*)} v_{\beta(G^*)}$
is a proper ordering of $M^*(G^*)$. Then we have the following results.

\begin{lem}\label{le3}
Let $\beta(G^*)\geq2$ and $i,j\in \{1,2,\ldots,\beta(G^*)\}$ with $i<j$.
Then $x^*_{u_i}\geq x^*_{v_j}$ if and only if $\{u_i,v_i,u_j,v_j\}$ induces two isolated edges or a copy of $K_4$.
\end{lem}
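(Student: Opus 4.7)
The approach combines edge-swap arguments via Lemma~\ref{le1} with the matching-extremality property of $M^*(G^*)$ from Definition~\ref{de1}. Set $a=x^*_{u_i}$, $b=x^*_{v_i}$, $c=x^*_{u_j}$, $d=x^*_{v_j}$; the proper ordering supplies $b\geq a$, $b\geq d$, and $d\geq c$. Let $E'=E(G^*)\cap\{u_iu_j,u_iv_j,v_iu_j,v_iv_j\}$. The goal is to prove $a\geq d$ if and only if $|E'|\in\{0,4\}$, corresponding to ``induced $2K_2$'' and ``induced $K_4$'', respectively.

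For the forward direction, I would assume $a\geq d$ and $E'\neq\emptyset$ and force $|E'|=4$ by case-splitting on the pattern of present and absent edges in $E'$. For each pattern with $|E'|<4$, the hypothesis $a\geq d$ supplies a sum inequality of the form required by Lemma~\ref{le1} (among $a+c\geq c+d$, $b+c\geq c+d$, $b+d\geq c+d$, and $a+d\geq a+c$), which lets me swap a missing cross-edge in for a present edge. The resulting graph $G'$ still has matching number at least $\beta$ thanks to one of the two replacement maximum matchings
\[
(M^*(G^*)\setminus\{u_iv_i,u_jv_j\})\cup\{u_iu_j,v_iv_j\},\qquad (M^*(G^*)\setminus\{u_iv_i,u_jv_j\})\cup\{u_iv_j,v_iu_j\}.
\]
For example, if only $v_iv_j\in E'$, I would swap $u_jv_j$ for $u_iu_j$ (using $a+c\geq c+d$) and verify the matching-number constraint via the first replacement; if only $u_iv_j\in E'$, I would swap $u_jv_j$ for $v_iu_j$ (using $b+c\geq c+d$) and use the second. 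Analogous swaps cover the other $|E'|=1$ patterns and all $|E'|\in\{2,3\}$ patterns, and in every case Lemma~\ref{le1} yields $q(G')>q(G^*)$, contradicting maximality.

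For the backward direction, if $|E'|=4$ then the first replacement matching above is itself a maximum matching of $G^*$, so Definition~\ref{de1} gives $(a+c)^2+(b+d)^2\leq (a+b)^2+(c+d)^2$, which simplifies to $2(a-d)(c-b)\leq 0$; since $c\leq b$, this forces $a\geq d$. If $|E'|=0$ I would argue by contradiction, assuming $a<d$: if either matching edge $u_iv_i$ or $u_jv_j$ is an isolated edge of $G^*$, then its Perron entries vanish and, via the proper ordering, $d=x^*_{v_j}=0\leq a$, contradicting $a<d$; otherwise I would apply Lemma~\ref{le1} to swap $u_iv_i$ out for $v_iv_j$ (the inequality $b+d>a+b$ follows from $d>a$) and restore a matching of size $\beta$ in the new graph via an augmenting-path argument exploiting external neighbors of $u_i$ and $u_j$, again contradicting maximality of $q(G^*)$.

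The main technical obstacle is the $|E'|=0$ sub-case of the backward direction: since no cross-edges among the four vertices are available, the clean alternative-matching comparison used for $|E'|=4$ gives no information, and preserving matching number under a Lemma~\ref{le1} swap requires a finer structural argument on $G^*$. By contrast, the forward direction, while case-heavy, is uniformly handled by Lemma~\ref{le1} together with the two explicit replacement matchings identified above.
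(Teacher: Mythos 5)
Your forward direction and your $K_4$ half of the backward direction do follow the paper's general strategy (single-edge swaps via Lemma~\ref{le1}, plus a comparison of the two perfect matchings of the four vertices against the extremality of $M^*(G^*)$). But the induced-$2K_2$ half of the backward direction, which you yourself flag as the main obstacle, is genuinely missing, and the step you sketch for it does not work: after deleting the matching edge $u_iv_i$ and adding $v_iv_j$, the vertex $v_j$ is still saturated by $u_jv_j$, so $(M^*(G^*)\setminus\{u_iv_i,u_jv_j\})\cup\{v_iv_j\}$ has size only $\beta-1$, and no augmenting path through ``external neighbors of $u_i$ and $u_j$'' need exist (both may be pendant vertices), so the new graph can fall out of $\mathfrak{G}_{m,\,\geq\beta}$. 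The paper's missing idea is a simultaneous two-edge exchange: set $G''=G^*-u_iv_i-u_jv_j+u_iu_j+v_iv_j$. The two new edges re-saturate all four vertices, so $\beta(G'')\geq\beta(G^*)$ automatically, and instead of Lemma~\ref{le1} one compares Rayleigh quotients directly: $q(G'')-q(G^*)\geq {X^*}^T[Q(G'')-Q(G^*)]X^*=2(x^*_{v_j}-x^*_{u_i})(x^*_{v_i}-x^*_{u_j})$, which is strictly positive under the contradiction hypothesis $x^*_{u_i}<x^*_{v_j}$ because rule (iii) of Definition~\ref{de2} forces $x^*_{v_i}>x^*_{u_j}$ (if $x^*_{v_i}=x^*_{u_j}$ then $x^*_{v_i}=x^*_{v_j}$, and (iii) gives $x^*_{u_i}\geq x^*_{u_j}\geq x^*_{v_j}$, a contradiction). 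This argument settles exactly the case where no cross-edges are available.

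Two smaller defects in the parts you did argue. First, in the $K_4$ case your conclusion ``$2(a-d)(c-b)\leq 0$ and $c\leq b$ forces $a\geq d$'' is vacuous when $c=b$; there you again need rule (iii) (here $b=c$ forces $b=d$, and (iii) then gives $a\geq c=d$), or, as in the paper, one runs this case under the hypothesis $a<d$ so that $b>c$ is strict and the strict inequality contradicts the extremality of $M^*(G^*)$. Second, every application of Lemma~\ref{le1} needs the added edge's entry sum to be strictly positive; since the relevant coordinates could all vanish, the paper first shows that any endpoint of an edge in $E(H)\setminus\{u_iv_i,u_jv_j\}$ has positive entry (by rerouting such an edge to $v_1$, whose entry is maximal and positive), a preliminary step your case analysis omits. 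These two points are repairable, but the $2K_2$ case is a substantive gap in the proposal.
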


\begin{proof}
Let $H$ be the subgraph of $G^*$ induced by $\{u_i,v_i,u_j,v_j\}$.
Firstly, suppose that $x^*_{u_i}\geq x^*_{v_j}$ and $H\ncong 2 K_2$.
Then there exists an edge $uv\in E(H)\setminus \{u_iv_i,u_jv_j\}$.
Note that $q(G^*)x^*_u=d_{G^*}(u)x^*_u+\sum_{w\in N_{G^*}(u)}x^*_w.$
If $x^*_u=0$, then $x^*_w=0$ for any $w\in N_{G^*}(u)$.
This implies that $v_1\notin N_{G^*}(u)$, since $x^*_{v_1}>0$.
Let $G=G^*-uv+uv_1$. It is obvious that $M^*(G^*)\subseteq E(G)$, and hence $G\in \mathfrak{G}_{m,~\geq\beta}$.
Since $x^*_u+x^*_{v_1}\geq x^*_u+x^*_v$, it follows from \textcolor[rgb]{0.00,0.07,1.00}{Lemma \ref{le1}} that $q(G)>q(G^*)$, a contradiction.
Therefore, $x^*_u>0$.

If $v_iv_j\notin E(H)$, we define  $G'=G^*-uv+v_iv_j$.
Note that $uv\neq u_iv_i$.
By \textcolor[rgb]{0.00,0.07,1.00}{Definition \ref{de2}}, we have $x^*_{v_i}+x^*_{v_j}\geq x^*_u+x^*_v$.
Similarly as above, we have $G'\in \mathfrak{G}_{m,~\geq\beta}$ and $q(G')>q(G^*)$.
Therefore, $v_iv_j\in E(H)$.
Recall that $x^*_{u_i}\geq x^*_{v_j}$. Then $x^*_{u_i}+x^*_{u_j}\geq x^*_{v_j}+x^*_{u_j}$.
\textcolor[rgb]{0.00,0.07,1.00}{Lemma \ref{le1}} implies that $u_iu_j\in E(H)$,
otherwise, $q(G^*-v_ju_j+u_iu_j)>q(G^*)$, and $\beta(G^*-v_ju_j+u_iu_j)\geq\beta(G^*)$
since $(M^*(G^*)\setminus \{u_iv_i,u_jv_j\})\cup \{u_iu_j,v_iv_j\}$ is a matching of $G^*-v_ju_j+u_iu_j$.
Furthermore, note that $x^*_{u_i}+x^*_{v_j}\geq x^*_{u_i}+x^*_{u_j}$ and $x^*_{v_i}+x^*_{u_j}\geq x^*_{u_i}+x^*_{u_j}$.
By \textcolor[rgb]{0.00,0.07,1.00}{Lemma \ref{le1}}, we have $u_iv_j, v_iu_j\in E(H)$.
It follows that $H\cong K_4$.

Conversely, assume that $x^*_{u_i}<x^*_{v_j}$.
We claim that either $u_iu_j\in E(H)$ or $v_iv_j\in E(H)$.
Otherwise, let $G''=G^*-u_iv_i-u_jv_j+u_iu_j+v_iv_j$.
Then $\beta(G'')\geq\beta(G^*)$ and hence $G''\in \mathfrak{G}_{m,~\geq\beta}$.
Moreover,
\begin{eqnarray*}
 q(G'')-q(G^*) &\geq& {X^*}^T[Q(G'')-Q(G^*)]X^*\\
 &=& (x^*_{u_i}+x^*_{u_j})^2+(x^*_{v_i}+x^*_{v_j})^2-(x^*_{u_i}+x^*_{v_i})^2-(x^*_{u_j}+x^*_{v_j})^2\\
 &=& 2(x^*_{u_i}x^*_{u_j}+x^*_{v_i}x^*_{v_j})-2(x^*_{u_i}x^*_{v_i}+x^*_{u_j}x^*_{v_j})\\
 &=& 2(x^*_{v_j}-x^*_{u_i})(x^*_{v_i}-x^*_{u_j}).
\end{eqnarray*}
According to \textcolor[rgb]{0.00,0.07,1.00}{Definition \ref{de2}}, we have $x^*_{v_i}\geq x^*_{v_j}\geq x^*_{u_j}$.
If $x^*_{v_i}=x^*_{u_j}$, then $x^*_{v_i}=x^*_{v_j}$. Now by the ordering rule (iii),
we have $x^*_{u_i}\geq x^*_{u_j}$, a contradiction. Thus, $x^*_{v_i}>x^*_{u_j}$.
Combining with $x^*_{u_i}<x^*_{v_j}$, we have
\begin{eqnarray}\label{eq7}
q(G'')-q(G^*)\geq2(x^*_{v_j}-x^*_{u_i})(x^*_{v_i}-x^*_{u_j})>0,
\end{eqnarray}
a contradiction. Thus, the claim holds and $H\ncong 2 K_2$.

If $H\cong K_4$, we define $M=(M^*(G^*)\setminus \{u_iv_i,u_jv_j\})\cup \{u_iu_j,v_iv_j\}$.
Then $M$ is a maximum matching of $G^*$. However, by the above discussion, $$(x^*_{u_i}+x^*_{u_j})^2+(x^*_{v_i}+x^*_{v_j})^2>(x^*_{u_i}+x^*_{v_i})^2+(x^*_{u_j}+x^*_{v_j})^2,$$
which contradicts that $M^*(G^*)$ is extremal. Hence, $H\ncong K_4$.
This completes the proof.
\end{proof}

\begin{figure}[t]
\centering \setlength{\unitlength}{2.4pt}
\begin{center}
\unitlength 0.8mm 
\linethickness{0.4pt}
\ifx\plotpoint\undefined\newsavebox{\plotpoint}\fi 
\begin{picture}(185.5,53.368)(0,0)
\put(49.5,38.25){\line(1,0){.25}}
\put(0,27){\makebox(0,0)[cc]{}}
\put(185.25,53.368){\line(1,0){.25}}
\put(135.75,42.118){\makebox(0,0)[cc]{}}
\put(182.75,37.868){\line(1,0){.25}}
\put(133.25,26.618){\makebox(0,0)[cc]{}}
\put(151.25,39.368){\line(1,0){.25}}
\put(101.75,28.118){\makebox(0,0)[cc]{}}
\put(116.25,38.868){\line(1,0){.25}}
\put(66.75,27.618){\makebox(0,0)[cc]{}}
\put(30,39.368){\line(0,-1){26}}
\put(62,39.368){\line(0,-1){26.25}}
\multiput(62,39.118)(-.04182879377,-.03372243839){771}{\line(-1,0){.04182879377}}
\put(30.25,39.368){\line(6,-5){31.5}}
\put(62.25,13.118){\circle*{4}}
\put(62,38.618){\circle*{4}}
\put(67.25,39.118){\makebox(0,0)[cc]{$u_2$}}
\put(67.25,13.118){\makebox(0,0)[cc]{$v_2$}}
\put(24.25,39.118){\makebox(0,0)[cc]{$u_1$}}
\put(30,13){\line(1,0){32}}
\put(30,13.118){\circle*{4}}
\put(30,38.618){\circle*{4}}
\put(24.25,13.118){\makebox(0,0)[cc]{$v_1$}}
\put(45.75,0){\makebox(0,0)[cc]{$H_1$}}
\put(97,38.618){\line(0,-1){26}}
\put(129,38.618){\line(0,-1){26.25}}
\put(129,12.368){\line(-1,0){32.25}}
\put(97.25,38.618){\circle*{4}}
\put(97,13.118){\circle*{4}}
\put(129,38.618){\circle*{4}}
\put(91.25,39.118){\makebox(0,0)[cc]{$u_1$}}
\put(91.25,13.118){\makebox(0,0)[cc]{$v_1$}}
\put(128.75,12.25){\circle*{4}}
\put(133.5,39.118){\makebox(0,0)[cc]{$u_2$}}
\put(133.5,13.118){\makebox(0,0)[cc]{$v_2$}}
\put(128.5,12.25){\line(1,0){31.75}}
\put(160,12){\line(0,1){25.5}}
\put(160,38.618){\circle*{4}}
\put(164.25,39.118){\makebox(0,0)[cc]{$u_3$}}
\put(164.25,13.118){\makebox(0,0)[cc]{$v_3$}}
\qbezier(97,12.75)(129.375,0)(160.25,12.25)
\put(129.25,0){\makebox(0,0)[cc]{$H_2$}}
\put(160,13.118){\circle*{4}}
\end{picture}
\end{center}
\caption{\footnotesize{The subgraphs $H_1$ and $H_2$ of $G^*$.}}\label{fig2}
\end{figure}
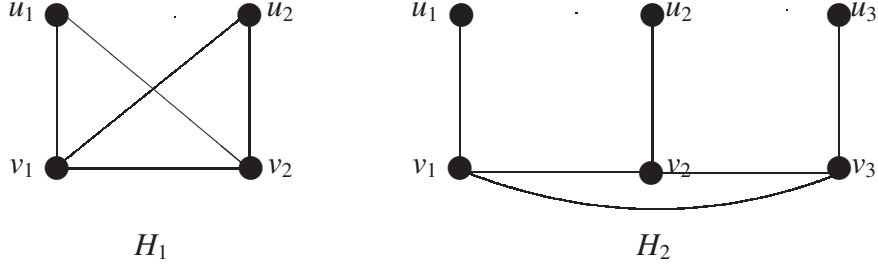

\begin{lem}\label{le4}
If $E_2(G^*)\neq\emptyset$,
then $G^*$ contains either $H_1$ or $H_2$ as a subgraph
(see \textcolor[rgb]{0.00,0.07,1.00}{Fig. \ref{fig2}}).
\end{lem}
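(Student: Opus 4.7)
Let $uv \in E_2(G^*)$, so $u, v \neq v_1$ and $uv \notin M^*$. First I show $uv_1, vv_1 \in E(G^*)$, yielding a triangle on $\{v_1, u, v\}$. If $uv_1 \notin E(G^*)$, set $G' = G^* - uv + uv_1$: since $x^*_{v_1}$ is the maximum coordinate (Lemma \ref{le2}), $x^*_u + x^*_{v_1} \geq x^*_u + x^*_v > 0$, so Lemma \ref{le1} forces $q(G') > q(G^*)$; but $M^* \subseteq E(G')$, so $G' \in \mathfrak{G}_{m, \geq\beta}$, contradicting extremality. Thus $uv_1 \in E(G^*)$, and symmetrically $vv_1 \in E(G^*)$.

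Next I show $u, v \in V^*$. If both lie outside $V^*$, then $M^* \cup \{uv\}$ has size $\beta(G^*)+1$, impossible. If exactly one, say $v$, is outside $V^*$, then $x^*_v \leq \min_{V^*} x^*$ by Lemma \ref{le2}. Choosing $j \geq 2$ so that $\{u_1, v_1, u_j, v_j\}$ are four distinct vertices, I apply a sequence of Lemma \ref{le1} swaps that replace $uv$ by edges from $\{v_1v_j, v_1u_j, u_1v_j, u_1u_j\}$; each such swap satisfies the Lemma \ref{le1} hypothesis via $x^*_{v_1} \geq x^*_u$, $x^*_{u_1} \geq x^*_v$, and proper ordering. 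Hence each target edge must already lie in $E(G^*)$, producing $K_4 - e$ or $K_4$ on $\{u_1, v_1, u_j, v_j\}$, so $H_1 \subseteq G^*$.

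Now assume $u, v \in V^* \setminus \{v_1\}$ with $u \in \{u_i, v_i\}$ and $v \in \{u_j, v_j\}$, $i \neq j$. If neither equals $u_1$, then $i, j \geq 2$ and the triangle $\{v_1, u, v\}$ has its three vertices in the distinct matching edges $u_1v_1, u_iv_i, u_jv_j$, giving $H_2$. Otherwise WLOG $u = u_1$. In the sub-case $v = u_j$, the two swaps $u_1u_j \to u_1v_j$ (valid since $x^*_{v_j} \geq x^*_{u_j}$) and $u_1u_j \to v_1v_j$ (valid by proper ordering) force $u_1v_j, v_1v_j \in E(G^*)$, so $\{u_1, v_1, u_j, v_j\}$ induces $K_4 \supseteq H_1$.

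The main obstacle is the sub-case $u = u_1$, $v = v_j$. The induced subgraph on $\{u_1, v_1, u_j, v_j\}$ begins with $u_1v_1, u_jv_j, u_1v_j, v_1v_j$. If $x^*_{u_1} \geq x^*_{v_j}$, Lemma \ref{le3} upgrades this to $K_4$, giving $H_1$. If $x^*_{u_1} < x^*_{v_j}$ and $j \geq 3$, applying Lemma \ref{le1} with swap targets $v_1 v_k$ for $2 \leq k \leq j$ and then $v_2 v_j$ (valid since $x^*_{v_2} \geq x^*_{v_j} > x^*_{u_1}$) produces the triangle $v_1 v_2 v_j$, which together with the matching edges $u_1v_1, u_2v_2, u_jv_j$ gives $H_2$. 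The hardest remaining case is $j = 2$ with $x^*_{u_1} < x^*_{v_2}$, where no third matching edge is available for $H_2$; here a finer structural argument (exploiting the extremality of $G^*$ against simpler candidates such as the graphs $S_{a,b,c}$) is needed to rule out the four-edge ``triangle plus pendant'' configuration on $\{u_1, v_1, u_2, v_2\}$, forcing a fifth edge on this set and hence $H_1$.
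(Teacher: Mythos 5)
Your overall plan (force the triangle $uv_1,vv_1$ through Lemma \ref{le1}, push both endpoints into $V^*$, then case-analyse which matching edges contain $u$ and $v$) is workable, and several of your cases are handled correctly — notably the case $i,j\geq 2$, where the triangle $\{v_1,u,v\}$ together with the three matching edges $u_1v_1,u_iv_i,u_jv_j$ gives $H_2$ very cleanly, and the case $v=u_j$, where the swaps of $u_1u_j$ to $u_1v_j$ and to $v_1v_j$ are legitimate. But the proof is not complete: you explicitly leave open the sub-case $uv=u_1v_2$ with $x^*_{u_1}<x^*_{v_2}$, and the "finer structural argument" you gesture at (comparing $G^*$ with graphs $S_{a,b,c}$) is not how this configuration gets eliminated and would not obviously work, since at this stage $G^*$ may still carry many undetermined edges in $E_1(G^*)$. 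The missing idea — which is exactly the paper's key step — is to give up removing the $E_2$-edge and instead remove the \emph{matching} edge $u_2v_2$: if $u_2v_1\notin E(G^*)$, put $G'=G^*-u_2v_2+u_2v_1$. Then $x^*_{v_1}+x^*_{u_2}\geq x^*_{v_2}+x^*_{u_2}>0$, so Lemma \ref{le1} gives $q(G')>q(G^*)$; and $\beta(G')\geq\beta(G^*)$ because $\bigl(M^*(G^*)\setminus\{u_1v_1,u_2v_2\}\bigr)\cup\{u_1v_2,u_2v_1\}$ is a matching of $G'$ — here the $E_2$-edge $u_1v_2$ is recycled to re-match $u_1$. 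This contradiction forces $u_2v_1\in E(G^*)$, which is the fifth edge on $\{u_1,v_1,u_2,v_2\}$ and yields $H_1$. Note this single trick settles your whole sub-case $v=v_j$ for every $j\geq2$ at once (swap $u_jv_j$ to $u_jv_1$), so the splitting on $x^*_{u_1}$ versus $x^*_{v_j}$ and on $j\geq3$ becomes unnecessary. The paper itself organizes the argument differently (it first forces $v_1v_2\in E(G^*)$, then shows $v_2u_1$ or $v_2v_3$ lies in $E_2(G^*)$ by comparing with $\max\{x^*_{u_1},x^*_{v_3}\}$, and finishes with exactly this matching-edge swap), but the swap just described is the ingredient your write-up lacks.

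A secondary, fixable looseness: in the case where exactly one endpoint $v$ of $uv$ lies outside $V^*$, you assert that all four swaps onto $\{v_1v_j,v_1u_j,u_1v_j,u_1u_j\}$ satisfy the hypothesis of Lemma \ref{le1} "via $x^*_{v_1}\geq x^*_u$, $x^*_{u_1}\geq x^*_v$, and proper ordering". For the targets $u_1v_j$ and $u_1u_j$ this requires $x^*_{v_j}\geq x^*_u$ (resp. $x^*_{u_j}\geq x^*_u$), which fails for arbitrary $j$ when $u$ sits high in the proper ordering. You need to choose $j$ as the index of the matching edge containing $u$ (so Lemma \ref{le2} gives $x^*_v\leq x^*_{u_1}$ and Definition \ref{de2} gives $x^*_{v_j}\geq x^*_u$), and treat $u=u_1$ separately by targeting $u_1v_2$ (valid since $x^*_{v_2}\geq x^*_v$); with those choices five edges on the relevant four vertices are forced and $H_1$ follows.
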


\begin{proof}
$E_2(G^*)\neq\emptyset$ implies that $\beta(G^*)\geq2$.
It follows from \textcolor[rgb]{0.00,0.07,1.00}{Definition \ref{de2}} and \textcolor[rgb]{0.00,0.07,1.00}{Lemma \ref{le2}} that,
$x^*_{v_1}=\max_{v\in V(G^*)}x^*_v$ and $x^*_{v_2}=\max_{w\in V(G^*)\setminus \{u_1,v_1\}}x^*_w$.
Let $uv\in E_2(G^*)$. Then $uv\neq u_1v_1$ and hence $x^*_{v_1}+x^*_{v_2}\geq x^*_u+x^*_v$.
Note that $x^*_{v_1}>0$.
If $v_1v_2\notin E(G^*)$, then by \textcolor[rgb]{0.00,0.07,1.00}{Lemma \ref{le1}},
we have $q(G^*-uv+v_1v_2)>q(G^*)$. Note that $G^*-uv+v_1v_2$ also belongs to $\mathfrak{G}_{m,~\geq\beta}$.
Thus, we get a contradiction to the maximality of $q(G^*)$.
Therefore, $v_1v_2\in E(G^*)$.

If $x^*_{u_1}\geq x^*_{v_2}$, then by \textcolor[rgb]{0.00,0.07,1.00}{Lemma \ref{le3}},
$\{u_1,v_1,u_2,v_2\}$ induces a copy of $K_4$. It follows that $H_1$ is a subgraph of $G^*$.

Now suppose that $x^*_{v_2}>x^*_{u_1}.$ Then $x^*_{v_2}=\max_{w\in V(G^*)\setminus \{v_1\}}x^*_w$.
Moreover, $\max\{x^*_{u_1}, x^*_{v_3}\}=\max_{w\in V(G^*)\setminus \{v_1,v_2,u_2\}}x^*_w$.
This indicates that $x^*_{v_2}+\max\{x^*_{u_1}, x^*_{v_3}\}\geq x^*_u+x^*_v$,
since $u\neq v_1, v\neq v_1$ and $uv\neq u_2v_2$.
Thus, \textcolor[rgb]{0.00,0.07,1.00}{Lemma \ref{le1}} implies that either $v_2u_1\in E_2(G^*)$ or $v_2v_3\in E_2(G^*)$.

Notice that $x^*_{v_1}\geq x^*_{v_2}>0$.
If $v_2u_1\in E_2(G^*)$, then $u_2v_1\in E_2(G^*)$
(otherwise, $\beta(G^*-u_2v_2+u_2v_1)\geq \beta(G^*)$ and $q(G^*-u_2v_2+u_2v_1)>q(G^*)$).
Therefore, $H_1\subseteq G^*$.
If $v_2v_3\in E_2(G^*)$, then $v_1v_3\in E_2(G^*)$
(otherwise, $\beta(G^*-v_2v_3+v_1v_3)\geq \beta(G^*)$ and $q(G^*-v_2v_3+v_1v_3)>q(G^*)$).
Therefore, $H_2\subseteq G^*$.
\end{proof}

\begin{thm}\label{th3}
If $m(G^*)\leq \beta(G^*)+4$, then $d_{G^*}(u_1)=1$, and $G^*$ is isomorphic to $S_{a,b,c}$
with possibly some isolated edges and isolated vertices.
\end{thm}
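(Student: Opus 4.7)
My plan is to imitate the strategy of Theorem \ref{th2}: show that $E_2(G^*)=\emptyset$, and then read off the claimed structure. Once $E_2(G^*)=\emptyset$, every edge of $G^*$ lies in $M^*(G^*)$ or is incident with $v_1$. In particular, $u_1$ has degree $1$ (any other edge at $u_1$ would lie in $E_2$), and each pair $\{u_i,v_i\}$ for $i\geq 2$ forms either an isolated edge, a pendant path $v_1u_iv_i$, or a pendant triangle $v_1u_iv_i$, according to whether neither, exactly one, or both of $v_1u_i,v_1v_i$ are edges of $G^*$; together with possible additional pendant edges at $v_1$, this yields $G^*\cong S_{a,b,c}$ plus isolated edges and isolated vertices, and $d_{G^*}(u_1)=1$.

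The heart of the argument is therefore to rule out $E_2(G^*)\neq\emptyset$ when $m(G^*)\leq\beta(G^*)+4$. Assume $E_2(G^*)\neq\emptyset$ for contradiction. By Lemma \ref{le4}, either $H_1\subseteq G^*$ or $H_2\subseteq G^*$. In the $H_1$ case the $K_4$ on $\{u_1,v_1,u_2,v_2\}$ together with the $\beta(G^*)-2$ remaining matching edges $u_3v_3,\ldots,u_{\beta(G^*)}v_{\beta(G^*)}$ already accounts for $\beta(G^*)+4$ edges, so equality must hold and $G^*=K_4\cup(\beta(G^*)-2)K_2$ plus possibly isolated vertices, giving $q(G^*)=q(K_4)=6$. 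I would then exhibit the competitor $G':=S_{3,0,1}$ (when $\beta(G^*)=2$) or $G':=S_{1,0,2}\cup(\beta(G^*)-3)K_2$ (when $\beta(G^*)\geq 3$); each lies in $\mathfrak{G}_{m,~\geq\beta}$ with the same size and matching number, and each properly contains $K_{1,5}$, so $q(G')>q(K_{1,5})=6=q(G^*)$, contradicting the maximality of $q(G^*)$.

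The $H_2$ case is more delicate. Here $H_2$ consists of the triangle $v_1v_2v_3$ with pendants $u_1,u_2,u_3$ attached at $v_1,v_2,v_3$ respectively, contributing $6$ edges; together with the remaining $\beta(G^*)-3$ matching edges this forces $m(G^*)\geq\beta(G^*)+3$, so $m(G^*)\in\{\beta(G^*)+3,\beta(G^*)+4\}$. When $m(G^*)=\beta(G^*)+3$, $G^*$ is exactly $H_2\cup(\beta(G^*)-3)K_2$ plus isolated vertices. Using the equitable partition of $H_2$ into the three triangle vertices and the three pendants, the Perron eigenvalue is seen to satisfy $(\lambda-1)(\lambda-5)=1$, giving $q(H_2)=3+\sqrt{5}$. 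Against this I would place the competitor $S_{2,0,1}\cup(\beta(G^*)-2)K_2$; an analogous equitable-partition calculation shows $q(S_{2,0,1})$ is the largest root of $\lambda^3-8\lambda^2+15\lambda-4=0$, which exceeds $3+\sqrt{5}$, giving the contradiction. When $m(G^*)=\beta(G^*)+4$, $G^*$ differs from $H_2\cup(\beta(G^*)-3)K_2$ by a single extra edge, which must lie within $V(H_2)$ or between $V(H_2)$ and a new vertex (an extra isolated edge would raise the matching number, which is excluded). I would beat $G^*$ using the competitor $S_{1,0,2}\cup(\beta(G^*)-3)K_2$, whose $q$ is the largest root of $\lambda^3-9\lambda^2+18\lambda-8=0$, either by an application of Lemma \ref{le1} that swaps the stray edge onto $v_1$ (using a vertex non-adjacent to $v_1$, which exists because $d_{G^*}(v_1)\leq 4$), or by a direct case check over the small list of positions the extra edge can occupy.

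The main obstacle is the $H_2$ case, in particular verifying $q(S_{2,0,1})>q(H_2)=3+\sqrt{5}$ for $m(G^*)=\beta(G^*)+3$, and handling the few positions of the extra edge when $m(G^*)=\beta(G^*)+4$ to show uniformly $q(G^*)<q(S_{1,0,2}\cup(\beta(G^*)-3)K_2)$. These reduce to computing characteristic polynomials of small quotient matrices arising from the natural equitable partitions of $H_2$, $S_{2,0,1}$, and $S_{1,0,2}$, together with one or two invocations of Lemma \ref{le1}; the estimates are concrete but need to be done carefully to close every sub-case.
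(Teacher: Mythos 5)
Your overall strategy (reduce to showing $E_2(G^*)=\emptyset$ via Lemma \ref{le4}, then compare $G^*$ with small competitors of the same size and matching number, and finally read off the $S_{a,b,c}$ structure) is the same as the paper's, and your $H_2$ computations are correct ($q(H_2)=3+\sqrt5$, and $q(S_{2,0,1})$ is indeed the largest root of $\lambda^3-8\lambda^2+15\lambda-4$, which exceeds $3+\sqrt5$). However, there is a genuine gap coming from a misreading of $H_1$: in Fig.~\ref{fig2}, $H_1$ is $K_4$ minus the edge $u_1u_2$ (five edges), not $K_4$, and Lemma \ref{le4} only guarantees this $K_4-e$; in the subcase of its proof where $v_2u_1,u_2v_1\in E_2(G^*)$ the edge $u_1u_2$ need not be present. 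Consequently your edge count fails: $H_1\subseteq G^*$ forces only $m(G^*)\geq\beta(G^*)+3$, not $\beta(G^*)+4$, and it does not force $G^*\cong K_4\cup(\beta(G^*)-2)K_2$. Your proposal therefore omits two subcases: (i) $m(G^*)=\beta(G^*)+3$ with $G^*\cong H_1\cup(\beta(G^*)-2)K_2$, which is easily recovered since $q(K_4-e)=3+\sqrt5$ and your $S_{2,0,1}$ comparison applies verbatim; and, more seriously, (ii) $m(G^*)=\beta(G^*)+4$ with $H_1\subseteq G^*$, $u_1u_2\notin E(G^*)$, and one undetermined extra edge. This second configuration is the hardest part of the paper's argument: there one first obtains $q(G^*)\geq q(S_{3,0,1})>q(K_{1,5})=6$, then uses Lemmas \ref{le1} and \ref{le3} to show the extra edge must be a pendant edge at $v_1$ (the graph $H_3$ of Fig.~\ref{fig3}), and finally derives the contradiction from $q(H_3)\leq q(H_4)\approx5.9452<6$. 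Nothing in your proposal addresses this configuration.

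A secondary issue is that your $H_2$ case with $m(G^*)=\beta(G^*)+4$ is only an outline. The proposed swap of the stray edge $uv$ onto $v_1$ via Lemma \ref{le1} requires a non-neighbour $w$ of $v_1$ with $x^*_{v_1}+x^*_w\geq x^*_u+x^*_v$ (and positivity); taking $w$ to be an isolated vertex would need $x^*_{v_1}\geq x^*_u+x^*_v$, which is not automatic, and when the stray edge is itself incident to $v_1$ (for instance a second pendant edge at $v_1$, or $v_1u_2$) no such swap is available, so one must actually pin down the possible positions (as the paper does, using Lemma \ref{le2} for endpoints outside $V^*$ and Lemma \ref{le3} for matching edges with $j\geq4$) and then beat the resulting graph spectrally. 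The paper closes all these subcases with the single uniform inequality $q(G^*)>6$ against $q(H_4)<6$; adopting that device would also repair the missing $H_1$ subcase, but as written your argument does not close the proof.
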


\begin{proof}
Recall that $E_1(G^*)=M^*(G^*)\cup \{v_1v~|~v\in N_{G^*}(v_1)\}$ and $E_2(G^*)=E(G^*)\setminus E_1(G^*).$
It is clear that the statement holds if $E_2(G^*)=\emptyset$.
Now assume that $E_2(G^*)\neq\emptyset$.
Then by \textcolor[rgb]{0.00,0.07,1.00}{Lemma \ref{le4}},
$G^*$ contains either $H_1$ or $H_2$ as a subgraph (see \textcolor[rgb]{0.00,0.07,1.00}{Fig. \ref{fig2}}).
This indicates that $m(G^*)\geq \beta(G^*)+3$.
In particular, if $m(G^*)=\beta(G^*)+3$,
then $G^*$ is isomorphic to either $H_1$ or $H_2$ with possibly some isolated edges and isolated vertices.
A simple calculation shows that $q(H_1)=q(H_2)=3+\sqrt{5}.$
On the other hand, let $G=S_{2,0,1}\cup (\beta(G^*)-2) K_2$.
Clearly, $\beta(G)=\beta(G^*)$ and $m(G)=\beta(G^*)+3=m(G^*)$.
However, $$q(G)=q(S_{2,0,1})\approx 5.3234>3+\sqrt{5}=q(G^*),$$ a contradiction.
Thus, $E_2(G^*)=\emptyset$.

It remains the case $m(G^*)=\beta(G^*)+4$. Let $G'=S_{3,0,1}\cup (\beta(G^*)-2) K_2$.
Clearly, $\beta(G')=\beta(G^*)$ and $m(G')=\beta(G^*)+4=m(G^*)$.
Therefore, $$q(G^*)\geq q(G')=q(S_{3,0,1})>q(K_{1,5})=6,$$
since $K_{1,5}$ is a proper subgraph of $S_{3,0,1}$.
If $\{u_1,v_1,u_2,v_2\}$ induces a $K_4$,
then $G^*$ is isomorphic to $K_4$ with possibly some isolated edges and vertices.
However, $q(G^*)=q(K_4)=6$, which contradicts that $q(G^*)>6$.
Thus, $\{u_1,v_1,u_2,v_2\}$ does not induce a $K_4$.

Firstly, assume that $H_1\subseteq G^*$. Then $u_1u_2\notin E(G^*)$.
Let $uv$ be the unique edge which is not yet determined in $G^*$.
Note that $x^*_{v_1}=\max_{v\in V(G^*)}x^*_v>0$.
Thus by \textcolor[rgb]{0.00,0.07,1.00}{Lemma \ref{le1}},
$uv$ is an edge incident to $v_1$, say, $u=v_1$.
If $v\in V^*$, then $v\in \{u_j,v_j\}$ for some $j\geq3$.
Let $H$ be the subgraph induced by $\{u_1,v_1,u_j,v_j\}$.
Then $H\ncong 2K_2$ since $v_1v\in E(H)\setminus\{u_1v_1,u_jv_j\}$; and $H\ncong K_4$ since $m(G^*)=\beta(G^*)+4$.
By \textcolor[rgb]{0.00,0.07,1.00}{Lemma \ref{le3}}, we have $x^*_{u_1}<x^*_{v_j}.$
Thus $x^*_{v_2}+x^*_{v_j}>x^*_{v_2}+x^*_{u_1}$.
By \textcolor[rgb]{0.00,0.07,1.00}{Lemma \ref{le1}},
we have $q(G^*-v_2u_1+v_2v_j)>q(G^*)$, a contradiction.
Therefore, $v\notin V^*$, that is, $v_1v$ is a pendant edge.
Hence, $G^*$ is isomorphic to $H_3$ with possibly some isolated edges and isolated vertices
(see \textcolor[rgb]{0.00,0.07,1.00}{Fig. \ref{fig3}}).

Secondly, assume that $H_2\subseteq G^*$.
Let $uv$ be the unique edge which is not yet determined in $G^*$.
If $u\in V^*$ and $v\notin V^*$, then by \textcolor[rgb]{0.00,0.07,1.00}{Lemma \ref{le2}},
$x^*_v\leq x^*_{u_2}$ and hence $x^*_{v_1}+x^*_{u_2}\geq x^*_u+x^*_v$.
By \textcolor[rgb]{0.00,0.07,1.00}{Lemma \ref{le1}}, we have $q(G^*-uv+v_1u_2)>q(G^*)$, a contradiction.
Thus, $u,v\in V^*$. If $u\in\{u_j,v_j\}$ for some $j\geq4$, then $v=v_1$,
since $x^*_{v_1}=\max_{w\in V^*}x^*_w$. Now, $\{u_2,v_2,u_j,v_j\}$ induces a copy of $2 K_2$.
By \textcolor[rgb]{0.00,0.07,1.00}{Lemma \ref{le3}}, we conclude that $x^*_{u_2}\geq x^*_u$.
Hence, $x^*_{u_2}+x^*_{v_1}\geq x^*_u+x^*_{v_1}$.
Thus we have $q(G^*-uv_1+u_2v_1)>q(G^*)$, a contradiction.
Therefore, $u,v\in \{u_i,v_i~|~i=1,2,3\}$.
Moreover, $uv\notin \{v_iv_j~|~1\leq i<j\leq3\}$, since $v_1v_2, v_2v_3,v_1v_3\in E(H_2)$.
If $uv\in\{u_iu_j~|~1\leq i<j\leq3\}$, say $uv=u_1u_2$, then $x^*_{v_1}+x^*_{u_2}\geq x^*_u+x^*_v$.
And hence $q(G^*-uv+v_1u_2)>q(G^*)$, a contradiction.
It follows that $u\in\{u_1,u_2,u_3\}$ and $v\in\{v_1,v_2,v_3\}.$
Thus, $G^*$ is isomorphic to $H_4$ with possibly some isolated edges and isolated vertices
(see \textcolor[rgb]{0.00,0.07,1.00}{Fig. \ref{fig3}}).

Notice that $H_3\subseteq H_4$. Thus, in both of above cases, we have
$q(G^*)\leq q(H_4)\approx5.9452,$ which contradicts that $q(G^*)>6$.
Therefore, $E_2(G^*)=\emptyset$ and the statement follows.
\end{proof}

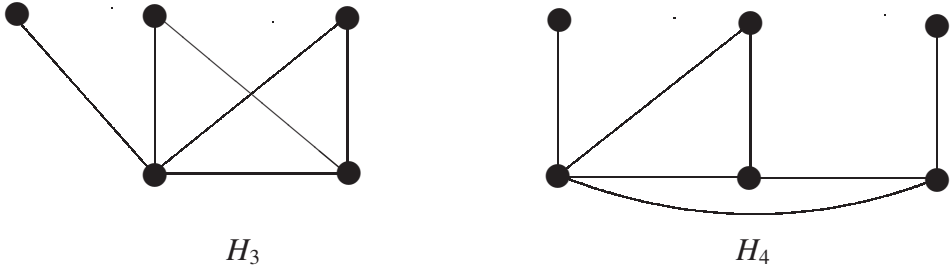
\begin{figure}[t]
\centering \setlength{\unitlength}{2.4pt}
\begin{center}
\unitlength 0.8mm 
\linethickness{0.4pt}
\ifx\plotpoint\undefined\newsavebox{\plotpoint}\fi 
\begin{picture}(185.5,53.368)(0,0)
\put(49.5,38.25){\line(1,0){.25}}
\put(0,27){\makebox(0,0)[cc]{}}
\put(185.25,53.368){\line(1,0){.25}}
\put(135.75,42.118){\makebox(0,0)[cc]{}}
\put(182.75,37.868){\line(1,0){.25}}
\put(133.25,26.618){\makebox(0,0)[cc]{}}
\put(151.25,39.368){\line(1,0){.25}}
\put(101.75,28.118){\makebox(0,0)[cc]{}}
\put(116.25,38.868){\line(1,0){.25}}
\put(66.75,27.618){\makebox(0,0)[cc]{}}
\put(30,39.368){\line(0,-1){26}}
\put(62,39.368){\line(0,-1){26.25}}
\multiput(62,39.118)(-.04182879377,-.03372243839){771}{\line(-1,0){.04182879377}}
\put(30.25,39.368){\line(6,-5){31.5}}
\put(62.25,13.118){\circle*{4}}
\put(62,38.868){\circle*{4}}
\put(30,13){\line(1,0){32}}
\put(30,12.75){\circle*{4}}
\put(30,39.25){\circle*{4}}
\put(97,38.618){\line(0,-1){26}}
\put(129,38.618){\line(0,-1){26.25}}
\put(129,12.368){\line(-1,0){32.25}}
\put(97.25,38.618){\circle*{4}}
\put(97,12.618){\circle*{4}}
\put(129,38.118){\circle*{4}}
\put(128.75,12.25){\circle*{4}}
\put(128.5,12.25){\line(1,0){31.75}}
\put(160,12){\line(0,1){25.5}}
\put(160,37.5){\circle*{4}}
\qbezier(97,12.75)(129.375,0)(160.25,12.25)
\put(160,12){\circle*{4}}
\put(44.75,0){\makebox(0,0)[cc]{$H_3$}}
\put(22.75,40.5){\line(1,0){.25}}
\put(23,40.5){\line(0,1){0}}
\put(23,40.5){\line(0,1){0}}
\put(129.5,0){\makebox(0,0)[cc]{$H_4$}}
\multiput(129,38)(-.04264099037,-.03370013755){727}{\line(-1,0){.04264099037}}
\multiput(29.75,13)(-.03369565217,.03804347826){690}{\line(0,1){.03804347826}}
\put(7.118,39.618){\circle*{4}}
\end{picture}
\end{center}
\caption{\footnotesize{The subgraphs $H_3$ and $H_4$ of $G^*$.}}\label{fig3}
\end{figure}

In the following, we give the proof of \textcolor[rgb]{0.00,0.07,1.00}{Theorem \ref{th1}}.

\medskip

\noindent\textbf{Proof of Theorem \ref{th1}.}
Recall that $G^*$ is the graph with maximal $Q$-spectral radius among all graphs in $\mathfrak{G}_{m,~\geq\beta}$,
where $\beta\geq2$. According to \textcolor[rgb]{0.00,0.07,1.00}{Theorems \ref{th2}-\ref{th3}},
$G^*$ is the disjoint union of $S_{a,b,c}$ with $d$ isolated edges and some isolated vertices,
where $a\geq1$ and $b,c,d\geq0$
(see \textcolor[rgb]{0.00,0.07,1.00}{Fig. \ref{fig1}}).
Let $v_1$ be the vertex of maximal degree in $G^*$ and $u_1v_1$ be a pendant edge.
Clearly, $x_{v_1}=\max_{v\in V(G^*)}x^*_v$ and $\beta(G^*)=b+c+d+1$.

We first claim that $\beta(G^*)=\beta$.
Suppose to the contrary that $\beta(G^*)\geq \beta+1\geq3.$
If $b\geq1$, we define a new graph $G$ by replacing a pendant path of length $2$ with two pendant edges incident to $v_1$.
If $d\geq1$, we define $G$ by replacing an isolated edge with a pendant edge incident to $v_1$.
Note that $a\geq1$. In both cases, we can see $\beta(G)=\beta(G^*)-1\geq \beta$, that is, $G\in\mathfrak{G}_{m,~\geq\beta}$.
However, by \textcolor[rgb]{0.00,0.07,1.00}{Lemma \ref{le1}},
we have $q(G)>q(G^*)$, a contradiction. Thus, $b=d=0$ and hence $\beta(G^*)=c+1$.
This implies that $c\geq2$ and $d_{G^*}(v_1)\geq5$. Hence, $q(G^*)\geq q(K_{1,5})=6$.
Let $\{v_1,u_2,v_2\}$ induce a triangle in $G^*$.
We now define $G$ by adding an isolated vertex $w$ and replacing the edge $u_2v_2$ with a pendant edge $wv_1$.
Then $\beta(G)=\beta(G^*)-1\geq \beta$. Note that $x^*_{u_2}=x^*_{v_2}=\frac{x^*_{v_1}}{q(G^*)-3}\leq \frac{x^*_{v_1}}{3}$.
Thus, $(x^*_{v_1}+0)^2>(x^*_{u_2}+x^*_{v_2})^2$. By \textcolor[rgb]{0.00,0.07,1.00}{Lemma \ref{le1}},
we have $q(G)>q(G^*)$, a contradiction.
The claim holds.

Assume that $a\geq2$.
Then $d=0$, otherwise, we can define a new graph $G'$ by replacing an isolated edge with a pendant edge incident to $u_1$.
Thus, $\beta(G')=\beta(G^*)$ and by \textcolor[rgb]{0.00,0.07,1.00}{Lemma \ref{le1}},
$q(G')>q(G^*),$ a contradiction.
Moreover, $b=0$, otherwise, let $v_1v_2u_2$ be a pendant path of length $2$ and define $G'=G^*-v_2u_2+v_2u_1$.
Observe that $\beta(G')=\beta(G^*)$ and $(q(G^*)-1)x^*_{u_1}=x^*_{v_1}\geq x^*_{v_2}=(q(G^*)-1)x^*_{u_2}$.
We have $x^*_{v_2}+x^*_{u_1}\geq x^*_{v_2}+x^*_{u_2}$.
By \textcolor[rgb]{0.00,0.07,1.00}{Lemma \ref{le1}}, $q(G')>q(G^*)$, a contradiction.
Therefore, $c=\beta-1$ and $a=m-3c=m-3\beta+3$.
This implies that $m\geq 3\beta-1$, since $a\geq2$.

It remains the case $a=1$. Now,
\begin{eqnarray}\label{eq8}
m=2b+3c+d+1.
\end{eqnarray}
Note that $\beta=b+c+d+1$. Thus we have
\begin{eqnarray}\label{eq9}
b+2c=m-\beta.
\end{eqnarray}
If $b\geq2$, say, $v_1v_2u_2$ and $v_1v_3u_3$ are two pendant paths of length $2$.
Define $G''=G^*-u_2v_2-u_3v_3+u_2u_3+v_2v_3$. Then $\beta(G'')=\beta(G^*)$.
Moreover, by symmetry of $G^*$,
we have $x^*_{v_2}=x^*_{v_3}$ and $x^*_{u_2}=x^*_{u_3}=\frac{x^*_{v_2}}{q(G^*)-1}<x^*_{v_2}$.
Similar to \textcolor[rgb]{0.00,0.07,1.00}{Inequality (\ref{eq7})},
we have $$q(G'')-q(G^*)\geq2(x^*_{v_3}-x^*_{u_2})(x^*_{v_2}-x^*_{u_3})>0,$$
a contradiction. Therefore, $b\leq1$.
Combining with \textcolor[rgb]{0.00,0.07,1.00}{Equalities (\ref{eq8})-(\ref{eq9})},
if $m-\beta$ is odd, then $b=1$, $c=\frac{m-\beta-1}2$ and $d=\frac{-m+3\beta-3}2$;
if $m-\beta$ is even, then $b=0$, $c=\frac{m-\beta}2$ and $d=\frac{-m+3\beta-2}2$.
Both cases imply that $m\leq 3\beta-2$.
This completes the proof.\hfill$\Box$



\end{document}